\documentclass{article}
\usepackage[dvipsnames]{xcolor}
\usepackage{graphicx} 
\usepackage{amsmath, amsthm, amssymb}
\usepackage{thmtools}
\usepackage{hyperref,cleveref}
\usepackage{tcolorbox}
\usepackage{todonotes}
\usepackage[ruled]{algorithm2e}
\usepackage{tikz}
\usepackage{enumitem}

\theoremstyle{plain}
\newtheorem{thm}{Theorem}[section]
\newtheorem{lem}[thm]{Lemma}
\newtheorem{prop}[thm]{Proposition}

\theoremstyle{definition}

\newtheorem{ex}[thm]{Example}
\newtheorem{obs}[thm]{Observation}
\newtheorem{question}[thm]{Problem}

\theoremstyle{remark}
\newtheorem{remark}[thm]{Remark}

\newcommand{\RR}{\mathbb{R}}
\newcommand{\ZZ}{\mathbb{Z}}

\newcommand{\bm}[1]{\boldsymbol{#1}}

\DeclareMathOperator{\cone}{cone}

\DeclareMathOperator{\col}{col}
\DeclareMathOperator{\row}{row}
\DeclareMathOperator{\cols}{cols}
\DeclareMathOperator{\rank}{rank}
\DeclareMathOperator{\rankzn}{rank_{\ZZ}^+}
\DeclareMathOperator{\rankp}{rank^{+}}

\title{Matrices of nonnegative integer rank two}
\author{Jo\~ao Gouveia, Amy Wiebe}
\date{\today}

\begin{document}

\maketitle


\begin{abstract}
    
The nonnegative integer rank of a matrix is a variant of the classical nonnegative rank, introduced in the 1980s, where factorizations are required to have integer entries. While computing nonnegative integer rank is generally very hard, we focus on a fundamental special case: determining when a rank 2 nonnegative integer matrix has nonnegative integer rank equal to 2 (the ``rank2 problem"). Although this problem is trivial in the continuous case, in this context it is surprisingly rich.

We provide a geometric reformulation in terms of affine semigroups and rational cones in the plane, which yields new structural insights. We show that any rank 2 integer matrix can be reduced to a $3\times 3$ matrix which has nonnegative integer rank $2$
if and only if the original one also has nonnegative integer rank $2$, with the reduction computable in polynomial time. This reduction reveals that the difficulty of the rank2 problem is already captured by small matrices.
Building on this geometric framework, we also develop an algorithm that solves the rank2 problem by strategically searching for integer generators within bounded regions of the associated cone. Although the theoretical worst-case complexity remains high, numerical experiments demonstrate that the algorithm performs efficiently in practice. 
\end{abstract}

\section{Introduction}

The nonnegative rank of a matrix,  $\rankp(A)$, has been studied since the 1970s (see \cite{berman1973rank}) and has developed a large body of literature in recent years due to its connection with extension complexities of polytopes and nonnegative matrix factorizations. A modern presentation of the topic and its applications can be found in \cite{gillis2020nonnegative}. 
In this paper, we will study a lesser known integer variant of this quantity, the \emph{nonnegative integer rank} of a matrix $A$, denoted by $\rankzn(A)$, which was introduced in the 1980s (see \cite{gregory1983semiring}). 

Both these ranks are special cases of what is known as a semiring or a factorization rank (see \cite{gregory1983semiring}). Given $A \in \ZZ_+^{n \times m}$, for any semiring $S$ that contains $\ZZ_+$ one can define $\rank_S(A)$ as the smallest $k$ for which there exist $B \in S^{n \times k}$ and $C \in S^{k \times m}$ such that $A=BC$. 

The \emph{nonnegative integer rank} of matrix $A$, denoted by $\rankzn(A)$, is the special case when $S=\ZZ_+$. For $S=\RR_+$ we get the usual nonnegative rank $\rankp(A)$, and for $S=\RR$ we simply obtain the rank. In particular we have
for any matrix $A \in \ZZ_+^{n \times m}$, 
\begin{equation} \label{eq:basicineq}
    \rank(A) \leq \rankp(A) \leq \rankzn(A).
\end{equation}
See \cite{beasley1995rank} for a more general discussion of these ranks.

Although much is known about $\rankp(A)$, very few results exist for $\rankzn(A)$. For the special case where $A$ is a $\{0,1\}$-matrix, the nonnegative integer rank can be seen as the bipartite clique partition number of a bipartite graph. Under this guise it has been extensively studied,  both in the computer science and in the linear algebra community, since the groundbreaking work of Orlin \cite{orlin1977contentment} which proposed its NP-completeness as an open problem (later confirmed in \cite{jiang1993minimal}). In the more general framework, however, most of the research focuses on identifying classes of matrices for which certain ranks coincide (for example, see \cite{BEASLEY198833,beasley1995rank,seok2006factor}). There do not seem to exist lower bounds that are not directly obtained through \eqref{eq:basicineq}. 

Recently, the authors of this paper have shown that, similarly to the nonnegative rank, the nonnegative integer rank can be interpreted in terms of certain extension complexities \cite{integerslack24}. Additionally, in \cite{dong2018integer}, Dong et al study the closely related integer nonnegative matrix approximation problem, providing a specialized algorithm for it and making a good case that there are application advantages of working over the integers directly, while providing good applied examples where the nonnegative integer rank admit natural interpretations. These connections motivate us to develop the theoretical foundations of this notion, a program we set in motion in the present paper.

Computing the nonnegative integer rank of a matrix is a very hard problem, so we will begin to look into simpler versions of that problem. Given the basic inequalities presented in \eqref{eq:basicineq}, a natural question is the following.

\begin{question}[The minrank problem]\label{Q:minrank}
    Given a nonnegative matrix $A$ how do we check if $\rankzn(A)=\rank(A)$?
\end{question}

This question is still very hard. The corresponding question for the nonnegative rank (called ExactNMF) is known to be NP-hard \cite{vavasis2010complexity}, but when restricted to matrices of fixed rank $1,2$ or $3$, it becomes solvable in polynomial time \cite{gillis2012geometric}. In the case of rank one, the minrank problem is trivial, since every nonnegative integer matrix with rank one can easily be seen to have nonnegative integer rank one. However, contrary to what happens with the usual nonnegative rank, the integer rank two case  is not trivial.

For nonnegative matrices, having rank two implies having nonnegative rank two, but it is not true that it implies having nonnegative integer rank two. A known example of this is the matrix
 $$B=\begin{pmatrix}
 2 & 0 & 3 \\
 1 & 1 & 4 \\
 1 & 3 & 9
 \end{pmatrix}$$
from  \cite[Example 4.2]{beasley1995rank}, which has rank (and consequently nonnegative rank) two but nonnegative integer rank three. Moreover, in \cite{integerslack24} it is shown that the nonnegative integer rank can in fact be arbitrarily high for rank two matrices.  This means the following restricted version of the minrank problem is already of significant interest.
\begin{question}[The rank$2$ problem]
    Given a nonnegative matrix $A$ how do we check if $\rankzn(A)=2$?
    \label{Q:rank2}
\end{question}

On a different but related direction we have the result of \cite{LaffeySmigoc} where it shown for $2 \times 2$ integer completely positive matrices, that they must have an integer cp-factorization.

In this paper, we present an answer to \Cref{Q:rank2} in the form of an algorithm that is inspired by a geometric reformulation of the problem.
The remainder of this paper is organized as follows. In \Cref{sec:geometry}, we reinterpret the nonnegative integer rank of a rank 2 matrix as a geometric question about finding integer generators in a rational cone in $\RR^2$. We introduce a canonical diagram representation and show that for any rank 2 nonnegative integer matrix we can reduce it to a $3\times 3$ matrix which has nonnegative integer rank 2 if and only if the original one also has nonnegative integer rank 2, with the reduction procedure being computationally efficient. In \Cref{sec:alg}, we present our algorithm for solving the rank2 problem, which exploits the geometric structure by decomposing the planar cone introduced in \Cref{sec:geometry} and searching over a bounded region of candidate generators. We provide detailed numerical tests demonstrating that despite the theoretical hardness of the problem, our algorithm runs in reasonable time even for large matrices.

\section{A geometric view of the rank2 problem}\label{sec:geometry}



\subsection{A geometric reformulation} \label{subsec:geometric}

We start by noting that for the minrank problem, Problem \ref{Q:minrank}, if we have an $n \times m$ nonnegative integer matrix $A$ of rank $k$, a positive answer corresponds to the existence of nonnegative integer matrices $B$ and $C$ of sizes $n \times k$ and $m \times k$, respectively, such that $A=BC^\top$. This would imply that the column space of $A$ equals the column space of $B$, since every column of $A$ is a combination of the columns of $B$ and the dimension of $\col(B)$ is at most $k$.

If we consider the affine semigroup $\Gamma=\col(A) \cap \ZZ^n_+$ of dimension $k$, then the minrank problem can be framed as asking if there exist $k$ elements in $\Gamma$ that span a semigroup that contains all columns of $A$. This leads to the following equivalent version of the minrank problem. 

\begin{question}
Given a $k$-dimensional rational linear space $L$ in $\RR^n$ and a finite set $S$ of $m$ elements in $L \cap \ZZ_+^n$ that span $L$, can we find a set $B$ of $k$ elements in $L \cap \ZZ_+^n$ so that every element of $S$ is a nonnegative combination of elements of $B$?
\end{question}

Although this interpretation holds for every rank $k$, in the case of $k=2$ the geometry is particularly simple. Given a nonnegative matrix $A\in\ZZ^{n\times m}$ of rank 2, we define the cone $K_A = \col(A)\cap\RR_+^n$. Since $A$ has rank 2, $\col(A)$ is a 2-dimensional subspace, and hence we can find a transformation that maps the lattice $\col(A)\cap\ZZ^n$ bijectively to $\ZZ^2$ and $K_A$ into $\RR^2$. We can therefore think of the rank$2$ problem in the following way.

\begin{question} \label{q:cone}
Given rational cone $K$ in $\RR^2$ and $m$ integer points $a_1,...,a_m$ in it, are there two integer points $b_1$, $b_2$ in $K$ such that every $a_i$ is a nonnegative integer combination of $b_1$ and $b_2$?
\end{question}

\begin{ex} \label{ex:graphicexample}
Consider again the  rank $2$ matrix 
$$B=\begin{pmatrix}
 2 & 0 & 3 \\
 1 & 1 & 4 \\
 1 & 3 & 9
 \end{pmatrix}.$$
The lattice $\col(B)\cap\ZZ^3$ is generated by the vectors $(0, 1, 3)^\top$ and $(1, 0, -1)^\top$. The linear transformation that sends these points to $(1,0)$ and $(0,1)$, respectively, sends the columns of $B$ to $\{(1,2),(1,0),(4,3)\}$. The cone $\col(B)\cap\RR_+^n$, which is spanned by $(0,1,3)^\top$ and $(3,1,0)^\top$, is sent to the cone spanned by $(1,0)$ and $(1,3)$.
\begin{figure}
    \centering    \includegraphics[width=0.45\linewidth]{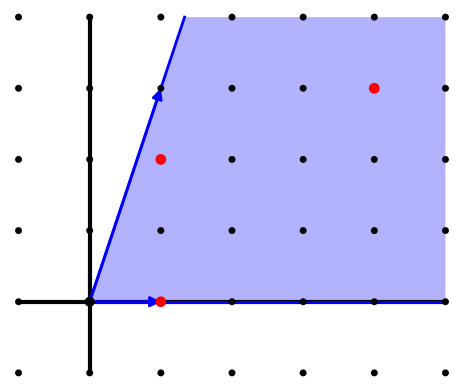}
    \caption{Graphical interpretation of Example \ref{ex:graphicexample}.}
    \label{fig:graphicexample}
\end{figure}

In Figure \ref{fig:graphicexample} we can see the graphical interpretation. We are asking if there are two lattice points in the blue region that generate all three red points. One can easily see that both $(1,0)$ and $(1,2)$ have to be there since no other points generate them, in other words, they are part of the Hilbert basis of the affine semigroup obtained by intersecting the cone with the integer lattice. Since they do not generate $(4,3)$ this immediately tells us the nonnegative integer rank of $B$ is indeed $3$.
 
\end{ex}

\begin{ex} \label{ex:graphicexample2}
Consider now the family of rank $2$ matrices 
$$B_t=\begin{pmatrix}
 t+1 & t & t-1 \\
 t & t & t \\
 t-1 & t & t+1
 \end{pmatrix},$$
 for any integer $t\geq 1$. The lattice $\col(B)\cap\ZZ^3$ is generated by the vectors $(1, 1, 1)^\top$ and $(1, 0, -1)^\top$. Applying the linear transformation that sends these points to $(0,1)$ and $(1,0)$, we get that the columns of $B_t$ are sent to $(-1,t)$ $(0,t)$ and $(1,t)$, and the cone $\col(B_t)\cap\RR_+^3$ is sent to the cone $K$ spanned by $(-1,1)$ and $(1,1)$. We can see the diagram for $t=4$ in Figure \ref{fig:graphicexample2}. For other values of $t$, the diagram will be the same but with the red points at height $t$.
\begin{figure}
    \centering    \includegraphics[width=0.5\linewidth]{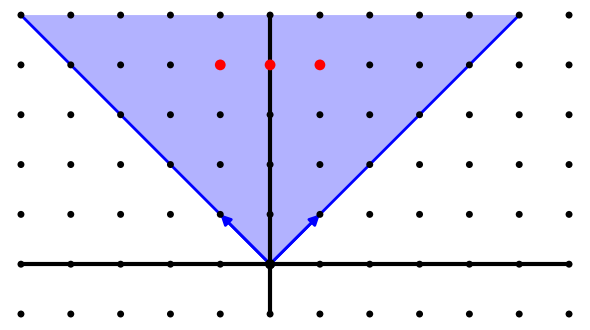}
    \caption{Graphical interpretation of Example \ref{ex:graphicexample2}.}
    \label{fig:graphicexample2}
\end{figure}

This case is a bit less obvious, as there are many possible generators to check (in theory, every point in the blue cone of height at most $t$). However, we will show that no choice of pairs of points can generate the three red points; thus, this family of matrices always has a nonnegative integer rank $3$.

Suppose that there exist $a$ and $b$ in $K \cap \ZZ^2$ that span all three points. Since the conic hull of $a$ and $b$ contains the three points, one must be to the `left' of $(-1,t)$ and the other to the `right' of $(1,t)$. Without loss of generality, we can then assume $a$ and $b$ to have negative and positive first coordinates, respectively. Then we know $$(0,t)=ka+(l+s)b, \ \ (-1,t)=(k+r)a+lb$$ for some nonnegative integers $k,l,r,s$. Subtracting we have $-ra+sb=(1,0)$, but both $-a$ and $b$ have the first coordinate at least $1$ so we would need $r=0$ and $s=1$ or $r=1$ and $s=0$, which would mean $b=(1,0)$ or $a=(-1,0)$ neither of which belongs to $K$.

\end{ex}

Note that the graphical diagram associated to a particular rank2 problem is not unique. This is because we can compose the isomorphism from $\col(A) \cap \ZZ^n$ to $\ZZ^2$, which we use to generate the diagram, with any linear automorphism of $\ZZ^2$. For consistency, we can define a canonical diagram corresponding to a matrix as follows.

 Consider one of the two minimal generators of the cone $K_A=\col(A)\cap\RR_+^n$. Let $(a,b)$ be its image in $\ZZ^2$. Minimality implies that $\gcd(a,b)=1$ so there exist $\alpha,\beta \in \ZZ$
so that $\alpha a + \beta b =1$. Then the matrix 
$$\begin{bmatrix}
\alpha & \beta \\ -b & a
\end{bmatrix}$$
is unimodular, hence the linear transformation associated to it is a lattice isomorphism that takes $(a,b)$ to $(1,0)$.
So we may assume that one of the rays of $K_A$ is the positive part of the $x$ axis. Now the image of the second cone generator is some $(c,d)$. It is easy to see that $d$ cannot be zero, and we may assume that it is positive (as we can always change the sign on the second coordinate of our map). Applying the unimodular transformation given by
$$\begin{bmatrix}
1 & \gamma \\ 0 & 1
\end{bmatrix}$$
we fix $(1,0)$ and move $(c,d)$ to $(c+\gamma d, d)$. By choosing the smallest integer~$\gamma$ that makes $c+\gamma d \geq 0$, we can make the first coordinate nonnegative and smaller than or equal to the second. We have just shown the following.
\begin{obs} \label{obs:canonical}
In Problem \ref{q:cone} we can always assume $K \subseteq \RR_+^2$ and is generated by $(1,0)$ and $(c,d)$ with $0 \leq c < d$.
\end{obs}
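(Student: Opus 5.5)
The plan is to show that every instance of Problem \ref{q:cone} can be carried, by a unimodular change of coordinates, to one in the stated normal form. The key point is that the problem is invariant under any lattice automorphism $T\in GL_2(\ZZ)$. If $T$ maps $K$ to $T(K)$ and the points $a_i$ to $T(a_i)$, then integer points $b_1,b_2\in K$ generate every $a_i$ with nonnegative integer coefficients exactly when $T(b_1),T(b_2)\in T(K)\cap\ZZ^2$ generate every $T(a_i)$ with the same coefficients. This holds because $T$ is linear and restricts to a bijection of $\ZZ^2$. So it suffices to produce some $T\in GL_2(\ZZ)$ with $T(K)=\cone\{(1,0),(c,d)\}$ and $0\le c<d$.

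The construction proceeds in four steps, following the discussion preceding the observation.

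\begin{enumerate}
\item Since $A$ has rank 2 and is nonnegative, $K$ is a two-dimensional pointed rational cone. Its two extreme rays are rational, so each is spanned by a primitive integer vector. Let $(a,b)$ be one of these, so $\gcd(a,b)=1$.
\item By Bézout, pick $\alpha,\beta$ with $\alpha a+\beta b=1$. The matrix $\begin{bmatrix}\alpha&\beta\\-b&a\end{bmatrix}$ has determinant $1$ and sends $(a,b)$ to $(1,0)$.
\item The other primitive generator now maps to some primitive $(c,d)$. We have $d\neq 0$, since otherwise the two rays would be collinear and $K$ would be either a line or a ray, contradicting that it is two-dimensional and pointed. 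If $d<0$, compose with $\mathrm{diag}(1,-1)$, which fixes $(1,0)$.
\item Apply the shear $\begin{bmatrix}1&\gamma\\0&1\end{bmatrix}$, which fixes $(1,0)$ and sends $(c,d)$ to $(c+\gamma d,d)$. Choose $\gamma$ so that $0\le c+\gamma d<d$, which is division with remainder by $d>0$.
\end{enumerate}

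The resulting cone is generated by $(1,0)$ and $(c',d)$ with $0\le c'<d$. It lies in $\RR_+^2$ because both generators have nonnegative coordinates.

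The only mild subtlety concerns the boundary case $c'=0$. The paper's prose allows $c'\le d$, but reducing modulo $d$ gives the strict inequality $c'<d$ directly; the equality $c'=d$ with $\gcd(c',d)=1$ would only arise when $d=1$, and then the reduction gives $c'=0$ instead. Primitivity is preserved throughout because all the maps used are unimodular, though primitivity is not actually needed for the statement.
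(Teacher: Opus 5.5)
Your proof is correct and follows the paper's own argument step for step: a Bézout-based unimodular map sends one primitive ray generator to $(1,0)$, a reflection makes $d>0$, and a shear reduces $c$ modulo $d$. Your choice of $\gamma$ by division with remainder is the same as the paper's ``smallest $\gamma$ with $c+\gamma d\ge 0$'', which indeed gives the strict inequality $c<d$. Your explicit $GL_2(\ZZ)$-invariance argument spells out what the paper leaves implicit.
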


\begin{ex}
Let us consider the example in Figure \ref{fig:graphicexample2}, coming from the case of Example \ref{ex:graphicexample2} where $n=4$ so that we are considering the matrix
$$\begin{bmatrix}
5 & 4 & 3 \\
4 & 4 & 4 \\
3 & 4 & 5
\end{bmatrix}.$$
One of the generators for the cone is the vector $(1,1)$. To send it to $(1,0)$ we take the unimodular transformation $(x,y) \mapsto (x,y-x)$. This sends the other generator, $(-1,1)$, to the point $(-1,2)$. We now compose it with the unimodular transformation $(x,y) \mapsto (x+y,y)$, which will send it to $(1,2)$. 

The composition of the two maps gives us the transformation $(x,y) \mapsto (y,y-x)$ and when we apply it to the full diagram we get the canonical form shown in Figure \ref{fig:graphicexample3}.
\begin{figure}
    \centering    \includegraphics[width=0.45\linewidth]{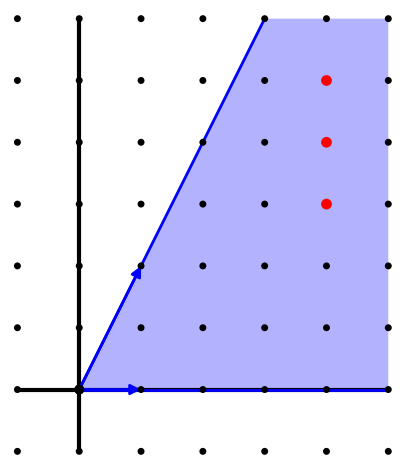}
    \caption{Transformation of Figure \ref{fig:graphicexample2} to a canonical form.}
    \label{fig:graphicexample3}
\end{figure}
\end{ex}

A source of ambiguity remains in this canonical form, since we can pick either of the two minimal generators of the cone to map to $(1,0)$, resulting in two different diagrams. Another thing to point out is that while the existence of such a form will be very useful to streamline arguments, there may be occasions where a different form of the diagram is more suitable. Example \ref{ex:graphicexample2} is a case where the canonical embedding is less amenable to work with than the initial embedding we used.

\subsection{Reduction to the $3\times 3$ case}
\label{subsec:reduction}

We saw in Section \ref{subsec:geometric} how to go from the rank2 problem to the problem of finding two generators in a rational cone for a finite set of points. We will start this section by trying the reverse: given a rational cone $K \subseteq \RR^2$ and a set of $m$ integer points $a_1,...,a_m$ in it, we want to construct a matrix that has nonnegative integer rank $2$ if and only if there are two integer points in $K$ that nonnegatively span $a_1,...,a_m$.

One interesting aspect is that while the number of columns of the matrix must correspond to the number $m$ of points, there is no such direct implication for the number of rows. We need to map the points $a_i$ linearly to some $\ZZ_+^n$ in such a way that their span intersected with $\RR_+^n$ is the image of the cone $K$ by the same linear map. 

\begin{ex} We will start by considering the example in Figure \ref{fig:graphicexample}. Our goal is to try to recover a matrix simply from the figure data. A first attempt would be to take the integer inequalities defining the faces of the cone $K$, 
$$h_1(x,y)=3x-y \geq 0, \ \ \ h_2(x) = y \geq 0;$$
and evaluate them at the three red points, obtaining the matrix
$$
\begin{bmatrix}
1 & 3 & 9 \\
2 & 0 & 3
\end{bmatrix}.
$$
However, this matrix has obviously nonnegative integer rank $2$, since the columns are generated by $(1,0)$ and $(0,1)$. The issue is that $(1,0)$ corresponds to the image of a non-integer point from $K$, the point $(1/3,0)$. We would like the integer points in the column space of our matrix to be images of points in $\ZZ^2$. 
In this case this can easily be achieved by adding a third, redundant, valid inequality
$$h_3(x,y)=x \geq 0,$$
adding a new row to our matrix and obtaining
$$
\begin{bmatrix}
1 & 3 & 9 \\
2 & 0 & 3 \\
1 & 1 & 4
\end{bmatrix}.
$$ 
In this case, we can see that we recovered precisely the original matrix from Example \ref{ex:graphicexample}, only with permuted rows.
\end{ex}

\begin{prop} \label{prop:3xn} For a rational pointed cone with nonempty interior $K \subseteq \RR^2$ and $m$ integer points $a_1,...,a_m \in K$, there is a $3 \times m$ matrix $B$ such that $\rankzn(B)=2$ if and only if there are two integer points in $K$ such that every $a_i$ is a nonnegative combination of them.
\end{prop}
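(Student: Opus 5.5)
The plan is to realize the cone $K$ inside $\RR^3_+$ by three integer linear forms that are nonnegative on $K$. These forms must be chosen so that the integer points of the resulting column space are exactly the images of integer points of $\ZZ^2$, and so that the nonnegative part of the column space is exactly the image of $K$. This is the construction suggested by the preceding example, where the redundant inequality $x\ge 0$ fixed the lattice problem.

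\textbf{Step 1 (normalize).} By Observation \ref{obs:canonical}, apply a lattice automorphism of $\ZZ^2$ so that $K$ is generated by $(1,0)$ and $(c,d)$ with $0\le c<d$ and $\gcd(c,d)=1$. Such an automorphism preserves the property ``there are two integer points of $K$ generating every $a_i$''. So from now on we work in this canonical form.

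\textbf{Step 2 (the map).} Define the linear map $\varphi:\RR^2\to\RR^3$ by
$$\varphi(x,y)=(dx-cy,\; y,\; x).$$
Let $B$ be the $3\times m$ matrix with columns $\varphi(a_i)$. Its entries are nonnegative integers because $a_i\in K\cap\ZZ^2$. We then verify three properties.
\begin{enumerate}[label=(\roman*)]
\item $\varphi$ is injective, since the last two coordinates already recover $(x,y)$.
\item If $\varphi(v)\in\ZZ^3$, then $v\in\ZZ^2$, for the same reason. Hence $\varphi(\RR^2)\cap\ZZ^3=\varphi(\ZZ^2)$.
\item We have $\varphi(v)\ge 0$ if and only if $y\ge 0$ and $dx-cy\ge 0$, since $x\ge 0$ then follows from $c\ge 0$ and $d>0$. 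These two inequalities describe exactly $K$. Hence $\varphi(\RR^2)\cap\RR^3_+=\varphi(K)$, and therefore $\varphi(\RR^2)\cap\ZZ^3_+=\varphi(K\cap\ZZ^2)$.
\end{enumerate}

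\textbf{Step 3 (equivalence).} Here ``nonnegative combination'' is read as a nonnegative integer combination, as in Problem \ref{q:cone}. We assume the $a_i$ span $\RR^2$, as they do when they come from a rank $2$ matrix; otherwise $B$ has rank one and the statement should be read as $\rankzn(B)\le 2$.

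Under this assumption, $\col(B)=\varphi(\RR^2)$ is $2$-dimensional. Suppose $B=B'C$ with $B'\in\ZZ_+^{3\times 2}$ and $C\in\ZZ_+^{2\times m}$. Then $\col(B)\subseteq\col(B')$, and since $\col(B')$ has dimension at most $2$, the two spaces are equal. So the columns of $B'$ lie in $\varphi(\RR^2)\cap\ZZ^3_+=\varphi(K\cap\ZZ^2)$. Write them as $\varphi(b_1),\varphi(b_2)$. Injectivity of $\varphi$ turns $\varphi(a_i)=C_{1i}\varphi(b_1)+C_{2i}\varphi(b_2)$ into $a_i=C_{1i}b_1+C_{2i}b_2$.

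Conversely, given $b_1,b_2\in K\cap\ZZ^2$ and nonnegative integer coefficients with $a_i=C_{1i}b_1+C_{2i}b_2$, set $B'=[\varphi(b_1)\;\varphi(b_2)]$. This is a nonnegative integer matrix, and $B=B'C$, so $\rankzn(B)\le 2$. Since $\rank(B)=2$, we get $\rankzn(B)=2$.

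The main point requiring care is property (ii), the saturation of the lattice. Using only the two facet inequalities would produce spurious integer points in the column space, such as $(1/3,0)$ in the example. Including the coordinate $x$ (together with $y$) is precisely what makes the image lattice saturated. The remaining subtlety is the degenerate collinear case, which must be excluded or interpreted as above.
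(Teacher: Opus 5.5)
Your proof is correct and follows essentially the paper's argument: after normalizing via Observation~\ref{obs:canonical}, the paper also evaluates the two facet forms and the redundant form $x\ge 0$ at the points, and uses that $(x,y)\mapsto(x,y,ax+by)$ is a lattice isomorphism onto $\col(B)\cap\ZZ^3$ whose nonnegative part is exactly the image of $K$. Your Step~3 spells out the transfer of factorizations that the paper leaves implicit. Your two caveats are accurate and are tacitly assumed by the paper: ``nonnegative combination'' must be read as a nonnegative integer combination, and the $a_i$ must span $\RR^2$.
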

\begin{proof}
To simplify the argument, note that by \Cref{obs:canonical} we can assume that $K$ is contained in the nonnegative quadrant and one of its rays is generated by $(1,0)$. 
One of the defining inequalities of $K$ will therefore be $y \geq 0$, the other will be $ax + by \geq 0$, for some integers $a,b$. We also know that $x \geq 0$ is valid on $K$. Evaluating these three linear forms on the points $a_1,...,a_m$, we will then give a $3 \times m$ nonnegative integer matrix $B$.

Since the rows of $B$ are given by the evaluations of $x,y$, and $ax+by$ on $m$ points, every point in $\col(B)$ will have the form $(x,y,ax+by)$. Thus, it is clear that the map $(x,y) \mapsto (x,y,ax+by)$ is a lattice isomorphism from $\ZZ^2$ to $\col(B) \cap \ZZ^3$. Moreover, a point in $\col(B)$ is nonnegative if and only if its preimage lies in $K$. So the cone $K$ together with the set of points $a_1,...,a_m$ is a diagram for the matrix $A$ as intended.
\end{proof}

We already knew how to go from the rank$2$ problem for an $n\times m$ matrix $A$ to Problem \ref{q:cone}, with $m$ integer points. Proposition \ref{prop:3xn} now tells us that we can go from that problem to a the rank$2$ problem on a $3 \times m$ matrix. We want to interpret this reduction directly in terms of matrices, bypassing the geometric construction. The general underlying idea is the following.

\begin{lem} \label{lem:matrix3n}
Let $A$ and $B$ be rank $r$ nonnegative integer matrices of size $n \times m$ and $k \times m$, respectively. If $A$ and $B$ satisfy the following conditions:
\begin{enumerate}[label = (\roman{enumi})]
    \item $A$ and $B$ have the same row space,
    \item $Ax \in \ZZ^n$ if and only if $Bx \in \ZZ^k$, and 
    \item $Ax \geq 0$ if and only if $Bx \geq 0$,
\end{enumerate}
then either $\rankzn(A)=\rankzn(B)=r$ or they are both greater than $r$.
\end{lem}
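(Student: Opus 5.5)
By symmetry of the hypotheses it is enough to prove one direction: if $\rankzn(A)=r$ then $\rankzn(B)=r$. Since both matrices have rank $r$, we always have $\rankzn\geq r$ by \eqref{eq:basicineq}, so this gives the dichotomy.

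The plan is to transport a factorization of $A$ to one of $B$ through a linear map between column spaces. Condition (i) says the row spaces agree, so $\ker A=\ker B$. Hence there is a well-defined linear bijection
$$\phi:\col(A)\to\col(B),\qquad \phi(Ax)=Bx,$$
and it is an isomorphism of $r$-dimensional spaces. Condition (ii) says that $\phi$ and $\phi^{-1}$ send integer points to integer points, so $\phi$ restricts to a lattice isomorphism $\col(A)\cap\ZZ^n\to\col(B)\cap\ZZ^k$. Condition (iii) says that $\phi$ maps $\col(A)\cap\RR^n_+$ bijectively onto $\col(B)\cap\RR^k_+$. Combining the two, $\phi$ restricts to a bijection between $\col(A)\cap\ZZ^n_+$ and $\col(B)\cap\ZZ^k_+$.

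Now take a factorization $A=PQ$ with $P\in\ZZ_+^{n\times r}$ and $Q\in\ZZ_+^{r\times m}$. As observed at the start of \Cref{subsec:geometric}, $\col(P)=\col(A)$, because $\col(A)\subseteq\col(P)$ and $\dim\col(P)\le r$. So each column $p_j$ of $P$ lies in $\col(A)\cap\ZZ^n_+$. Define $P'$ as the $k\times r$ matrix whose columns are $\phi(p_j)$; by the previous paragraph these are nonnegative integer vectors. Linearity of $\phi$ applied column by column then gives
$$B e_i=\phi(Ae_i)=\phi\Big(\sum_j Q_{ji}p_j\Big)=\sum_j Q_{ji}\phi(p_j),$$
that is, $B=P'Q$. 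This is a nonnegative integer factorization of size $r$.

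The step needing the most care is the well-definedness of $\phi$ and the correct reading of conditions (ii) and (iii). These conditions are stated for arbitrary real $x$, so they directly describe the images $Ax$ and $Bx$ as one and the same point is parametrized by $x$. The only subtlety is to check that every point of $\col(A)$ is of the form $Ax$, which is true by definition, and that the kernels coincide, which follows from (i). Everything else is routine.
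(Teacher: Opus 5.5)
Your proposal is correct and takes essentially the same route as the paper. You define $\phi(Ax)=Bx$, get well-definedness from the equality of kernels forced by (i), and use (ii) and (iii) to see that $\phi$ and its inverse preserve integrality and nonnegativity; you then push the columns of an $r$-column factor $P$, which lie in $\col(A)$, through $\phi$ to obtain $B=P'Q$. The only cosmetic difference is that you define $\phi$ on all of $\col(A)$ and then restrict it, whereas the paper defines it directly on the lattice $\col(A)\cap\ZZ^n$.
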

\begin{proof}
Define a lattice isomorphism $\varphi$ from $\col(A) \cap \ZZ^n$ to $\col(B) \cap \ZZ^k$ by sending the $i$-th column of $A$ to the $i$-th column of $B$, and extend it linearly, i.e., for $Ax \in \ZZ^n$, $\varphi(Ax)=Bx$.

This map is well-defined: $Ax \in \ZZ^n$ implies $Bx \in \ZZ^k$ by hypothesis, while $Ax=Ay$ means that $x-y$ is orthogonal to the row space of $A$, which is the same as that of $B$, hence $Bx=By$. By symmetry, it is actually an isomorphism, with the inverse being the linear map that sends the $i$-th column of $B$ to the $i$-th column of $A$. Moreover $\varphi$ and its inverse send nonnegative elements to nonnegative elements by the third condition.

Now note that $\rankzn(A)=r$ if and only if $A=MN$ where $M$ and $N$ are nonnegative integer matrices and $M$ is $n \times r$. Note that the columns of $M$ are in the column space of $A$, hence we can apply $\varphi$ to each of them to get a new matrix $M' \in \ZZ_+^{k \times r}$. The linearity of $\varphi$ then immediately tells us that $M'N = B$ which tells us that $\rankzn(B)=r$. The same argument gives us the reverse implication. 
\end{proof}

The argument in Proposition \ref{prop:3xn} boils down to saying that for every $n \times m$ nonnegative integer matrix $A$ we can construct a $3 \times m$ nonnegative integer matrix $B$ satisfying the conditions of Lemma \ref{lem:matrix3n}. In what follows we show how to construct such a matrix explicitly.

To guarantee that conditions (i) and (ii) are satisfied, we must pick the rows of $B$ in such a way that they generate the same lattice as the rows of $A$, $\Lambda_{\row}(A)$. We will start by picking two rows that will ensure that the third condition holds. They should correspond to the inequalities that cut out the cone $\col(A)\cap \RR_+^n$, so we will start by identifying the two extreme rays of this cone. Because we are working over a two dimensional space we can do this easily, for instance, by taking two columns of $A$ that are not linearly dependent, $A_1$ and $A_2$, and considering the ray spanned by $A_\lambda=(1-\lambda)A_1 + \lambda A_2, \lambda \in \RR$. For $\lambda \in [0,1]$ this is in the positive orthant, but has $\lambda$ increases beyond $1$ some entry $i$ will eventually turn from positive to zero at some value $\lambda^+$. Similarly, as $\lambda$ decreases further than $0$, at some point a positive entry $j$ will become zero at some $\lambda^-$. The rays spanned by $A_{\lambda^-}$ and $A_{\lambda^+}$ are the extreme rays of the cone $\col(A)\cap \RR_+^n$. Moreover, the row $i$ (and row $j$) of $A$ correspond to the evaluation at each column of $A$ of a linear form that vanishes on $A_{\lambda^+}$ (respectively $A_{\lambda^-}$). We could add these two rows $A_i$ and $A_j$ directly to $B$, but in order to help to verify condition (ii), we will first divide each row by the largest positive integer such that the row is still in $\Lambda_{\row}(A)$. Thus, we obtain our first two rows of $B$, $b_1$ and $b_2$, where both are primitive elements of $\Lambda_{\row}(A)$.

With this we already guarantee $\row(A)=\row(B)$ and $Ax \geq 0$ if and only if $Bx \geq 0$.  For condition (ii), by construction we have the lattice generated by the rows of $B$, $\Lambda_{\row}(B)$,  contained in $\Lambda_{\row}(A)$. The only question remaining is if this lattice is strictly smaller than $\Lambda_{\row}(A)$.

To ensure this is not the case, we want to find an integer element $b_3$ in $\Lambda_{\row}(A)$, obtained as a positive rational combination of $b_1$ and $b_2$,  that together with $b_1$ and $b_2$ spans that lattice, and add it to $B$. This is always possible, since any primitive element of a rank 2 lattice can be extended to a basis of said lattice by adding some element. By adding multiples of $b_1$ and $b_2$ to this new element, we can then guarantee that it becomes a positive combination of $b_1$ and $b_2$, without changing the lattice spanned.

To do this in practice, find a basis $a_1,a_2$ for the lattice $\Lambda_{\row}(A)$, by using the LLL algorithm for instance, and write $b_1$ in terms of that basis, obtaining $(p,q)$. The primitivity of $b_1$ implies $\textup{gcd}(p,q)=1$, so by using the extended euclidean algorithm, we can find $r,s \in \ZZ$ such that $rp-sq=1$. Then $\overline{b_3}=sa_1+ra_2$ spans the lattice $\Lambda_{\row}(A)$ together with $b_1$. By adding positive multiples of $b_1$ and $b_2$ we can replace $\overline{b_3}$ by a $b_3$ that is a positive combination of $b_1$ and $b_2$, such that $\{b_1,b_2,b_3\}$ span the lattice as intended. It is now simple to check that this matrix $B$ satisfies all the conditions of \Cref{lem:matrix3n}.

Note that the construction of $M$ is computationally simple. The only nontrivial steps of the procedure are the use of LLL and the extended euclidean algorithm, both of which are polynomial in the bitlengths of the numbers, and on the sizes of the matrices involved.

We can now leverage this construction, applying it twice. If we have an $n \times m$ matrix of rank $2$ fow which we want to solve the minrank problem, we can apply Proposition \ref{prop:3xn} to obtain a $3 \times n$ matrix with the same solution. Now, since transposing a matrix does not change its ranks, we can apply the proposition again to the transpose of this matrix, attaining a $3 \times 3$ matrix that will also have the same solution as the original one. We thus obtain the following fact.

\begin{thm}
Given a rank $2$ nonnegative integer matrix $A \in \ZZ_+^{n \times m}$, we can always construct a $3 \times 3$ nonnegative integer matrix $B$ of rank $2$ whose nonnegative integer rank is $2$ if and only if the nonnegative integer rank of $A$ is $2$. Moreover, the computational effort and the norms of the entries of $B$ are polynomial in $n,m$ and $\log(\max_{i,j} |A_{ij}|)$.
\end{thm}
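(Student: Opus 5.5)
The plan is to apply the explicit row-reduction construction described after Lemma \ref{lem:matrix3n} twice: once to $A$, and once to the transpose of the result. Each application uses Lemma \ref{lem:matrix3n} with $r=2$, and what remains is to check the polynomial size bounds.

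\textbf{First reduction.} Starting from $A\in\ZZ_+^{n\times m}$ of rank $2$, I will build $B_1\in\ZZ_+^{3\times m}$ as follows.
\begin{enumerate}
\item Pick two linearly independent columns $A_1,A_2$ and compute $\lambda^{\pm}$. These are ratios of entries of $A$, so they are rational of polynomial bitsize. This identifies two rows $A_i,A_j$ which vanish on the extreme rays of $K_A$.
\item Compute an LLL basis $a_1,a_2$ of $\Lambda_{\row}(A)$. Divide $A_i,A_j$ by their content in this basis to obtain primitive rows $b_1,b_2$.
\item Use the extended Euclidean algorithm to find $\overline{b_3}$ with $\{b_1,\overline{b_3}\}$ a basis of $\Lambda_{\row}(A)$. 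Add nonnegative integer multiples of $b_1,b_2$ to get $b_3$ in the interior of the cone spanned by $b_1,b_2$.
\end{enumerate}
The three conditions of Lemma \ref{lem:matrix3n} then hold. Condition (i) holds because all rows lie in $\row(A)$ and $b_1,b_2$ are independent. Condition (ii) holds because the rows of $B_1$ generate exactly $\Lambda_{\row}(A)$: if $Ax\in\ZZ^n$, then every vector in $\Lambda_{\row}(A)$ pairs integrally with $x$, so $B_1x\in\ZZ^3$; conversely, each row of $A$ is an integer combination of $b_1,b_3$. Condition (iii) holds because $b_1,b_2$ cut out $\col(A)\cap\RR_+^n$ and $b_3$ is a positive combination of them. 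Hence $\rankzn(A)=2$ if and only if $\rankzn(B_1)=2$. Moreover, $B_1$ is nonnegative, since its rows evaluate valid inequalities of $K_A$ at the columns of $A$. It also has rank $2$, since $\row(B_1)=\row(A)$.

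\textbf{Second reduction.} Apply the same construction to $B_1^\top\in\ZZ_+^{m\times 3}$ to obtain $B_2\in\ZZ_+^{3\times 3}$. The transpose does not change the rank or $\rankzn$, because $A=MN$ if and only if $A^\top=N^\top M^\top$. Take $B=B_2$ (or $B_2^\top$). This proves the equivalence claimed in the theorem.

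\textbf{Complexity and size bounds.} I expect this to be the main obstacle: bounding the bitsize of $b_3$ and of the entries of $B$, not just the running time. The steps are as follows.
\begin{itemize}
\item LLL on the $n$ rows of $A$ in $\ZZ^m$ runs in polynomial time. It returns basis vectors of bitsize polynomial in $n,m,\log\max|A_{ij}|$.
\item The coordinates $(p,q)$ of $b_1$ in this basis are obtained by solving a $2\times 2$-minor linear system, so they have polynomial bitsize. The Bezout coefficients $r,s$ can be taken with $|r|\le|q|$ and $|s|\le|p|$.
\item To make $b_3$ a positive combination, write $\overline{b_3}=\mu b_1+\nu b_2$ with $\mu,\nu\in\mathbb{Q}$ computed by Cramer's rule. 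Adding $\lceil|\mu|\rceil+1$ copies of $b_1$ and $\lceil|\nu|\rceil+1$ copies of $b_2$ suffices. This keeps the bitsize polynomial, since $\mu,\nu$ are ratios of determinants of polynomial bitsize.
\item Then $B_1$ has entries bounded by polynomially many bits, being inner products of these rows with the columns of $A$.
\item The second application is fed $B_1^\top$, whose size is $3\times m$ and whose bitsize is polynomial. The same estimates therefore compose to a polynomial bound for $B$.
\end{itemize}
The point needing care is that Cramer's-rule denominators could produce large multipliers. Since they are determinants of integer $2\times2$ minors of already polynomially-sized data, their logarithms stay polynomial.
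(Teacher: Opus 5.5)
Your proposal is correct and follows the paper's argument: you use the explicit construction following Lemma \ref{lem:matrix3n} (primitive extreme-ray rows $b_1,b_2$ plus a lattice-completing positive combination $b_3$), verify it against the lemma's three conditions, and apply it once to $A$ and once to the transpose, while your Bezout/Cramer estimates flesh out the paper's one-line appeal to LLL and the extended Euclidean algorithm for the size bounds. One small slip in your check of condition (ii): after the shift $b_3=\overline{b_3}+\alpha b_1+\beta b_2$, the pair $\{b_1,b_3\}$ is in general no longer a basis of $\Lambda_{\row}(A)$, so the rows of $A$ are integer combinations of $b_1,b_2,b_3$ (via $\overline{b_3}=b_3-\alpha b_1-\beta b_2$), not of $b_1,b_3$ alone, and that is still exactly what (ii) requires.
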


\begin{remark} One could think that a similar geometric reduction would work for other minrank problems, not necessarily just rank $2$. However, this construction makes key use of the fact that the cone $\col(A) \cap \RR_+^n$ is two dimensional, and any two dimensional cone is defined by at most two inequalities. For any other rank, the number of inequalities defining $\col(A) \cap \RR_+^n$ can be $n$, which makes this geometric approach useless.
\end{remark}

\begin{ex}
We will explore this reduction procedure with a $5 \times 5$ example.
Consider the rank $2$ matrix
$$
A=\begin{bmatrix}
 2 & 4 & 6 & 4 & 2 \\
 4 & 7 & 10 & 5 & 2 \\
 5 & 8 & 11 & 4 & 1 \\
 2 & 6 & 10 & 10 & 6 \\
 3 & 7 & 11 & 9 & 5 \\
\end{bmatrix}.
$$
Its diagram is the first in Figure \ref{fig:graphicexample4}. 

\begin{figure}
    \centering    \includegraphics[width=0.35\linewidth]{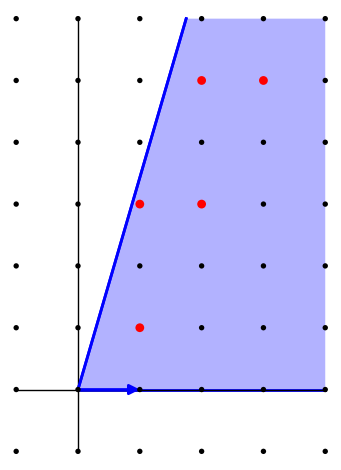}  \hspace{2cm} \includegraphics[width=0.35\linewidth]{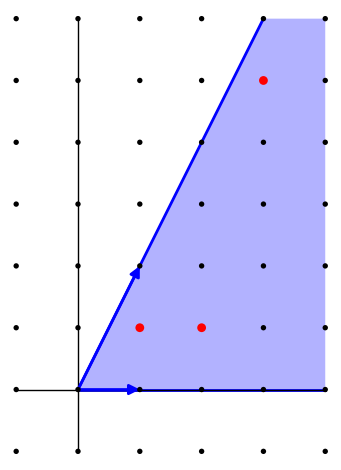}
    \caption{Diagrams of a $5 \times 5$ matrix and its reduction.}
    \label{fig:graphicexample4}
\end{figure}

An LLL-reduced basis for the $\Lambda_{\row}(A)$ can be computed to be
\begin{equation}
 \begin{bmatrix}
 1 & 1 & 1 & -1 & -1 \\
 0 & -1 & -2 & -3 & -2 \\
\end{bmatrix}.  \label{latticebasis} 
\end{equation}

By picking $A_1$ and $A_2$ the first two columns, we notice that $3A_1-A_2=(2,5,7,0,2)^\top$ and $5A_2-8A_1=(4,3,0,14,11)^\top$ are the integer generators of the cone $\textup{col}(A) \cap \RR_+^5$. The zeros are in rows $3$ and $4$ so we will start building $B$ by taking each of these rows of $A$ divided by the largest common divisor of its entries such that the result is still in $\Lambda_{\row}(A)$:
$$B'=\begin{bmatrix}
 5 & 8 & 11 & 4 & 1 \\
 1 & 3 & 5 & 5 & 3 \\
 \end{bmatrix}.$$

In terms of the basis in \eqref{latticebasis} before these rows correspond to vectors $(5,-3)$ and $(1,-2)$. Taking the first vector as $b_1$ we have that $5\times (-1) - (-3)\times2 = 1$, hence we get $\overline{b_3}=(2,-1)$ in terms of our basis, which corresponds to $(2,3,4,1,0)$. In terms of $b_1$ and $b_2$ this is $\frac{1}{7}(3b_1-b_2)$ so we
need to add $b_2$ to make it a positive combination, and we get $b_3=(3,6,9,6,3)$. We could use it as is, or make it primitive. Since its coordinates in the basis of the lattice are $(3,-3)$, we can divide it by $3$ while keeping it in the lattice, so we obtain the 
$3 \times 5$ matrix
$$
B=\begin{bmatrix}
 5 & 8 & 11 & 4 & 1\\
 1 & 3 & 5 & 5 & 3 \\
 1 & 2 & 3 & 2 & 1
 \end{bmatrix}.
$$
Notice that these three rows are very much related to the original matrix: they are the third, fourth and first rows of $A$ up to multipliers. The diagram of (the transpose) of this matrix is the second one in Figure \ref{fig:graphicexample4}. 

Repeating this operation on the transpose of $B$, we pick the two last rows $B_2$ and $B_3$ and note that $B_2-B_3=(0,1,2,3,2)$ and $3B_3-B_2=(2,3,4,1,0)$ generate $\textup{row}(B) \cap \RR_+^5$, so we need the first and last columns of $B$, which are obviously primitive
$$
C'=\begin{bmatrix}
 5 & 1  \\
 1 & 3  \\
 1 & 1  
 \end{bmatrix}.
 $$
 Using LLL we obtain the basis $(-2,1,0)^\top$, $(1,3,1)^\top$ for the column lattice. Our columns are then $(-2,1)$ and $(0,1)$ in this basis. It is clear that the basis element $(-2,1,0)^\top$ completes the basis, but since
 $(-2,1,0)^\top$ is $\frac{1}{2}(c_2-c_1)$ we have to add it $c_1$ to make it an integer combination, obtaining $c_3=(3,2,1)^\top$, which is primitive.
This gives us the $3 \times 3$ matrix
$$
C=\begin{bmatrix}
 5& 1 & 3 \\
 1 & 3 & 2 \\
 1 & 1 & 1 
 \end{bmatrix}.
$$
\end{ex}

\begin{ex}
The fact that we can reduce checking if nonnegative integer rank equals 2 to the case of $3 \times 3$ matrices could suggest that, in analogy to the usual rank, if every $3 \times 3$ submatrix has nonnegative integer rank $2$ than so does have the full matrix. However, as the following matrix shows, this is wrong. In Figure \ref{fig:graphicexample5} we can see the diagram of the matrix
$$  
\begin{bmatrix}
0 & 6 & 10 & 15 \\
1 & 3 & 5 & 8   \\
5 & 9 & 15 & 25
\end{bmatrix}.
$$
What happens is that every set of three red points can be spanned by two points in the cone, but the four of them can not.

\begin{figure}
    \centering    \includegraphics[width=0.3\linewidth]{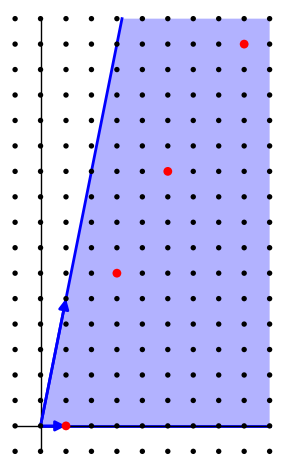}
    \caption{Diagram of a $3 \times 4$ matrix.}
    \label{fig:graphicexample5}
\end{figure}

To see it in the matrix just note that any pair of  generators for the column lattice would need to include one with zero on the first entry. Now the largest common divisor of the first entries of the remaining three columns is $1$, and there is no integer column in the column span with one on its first entry, so you would need $3$ generators.

\end{ex}

\section{Algorithm}
\label{sec:alg}

In the previous section we saw that checking if an $n \times m$ matrix of rank $2$ also has nonnegative integer rank $2$ can be reduced to checking it for a $3 \times 3$ matrix. Unfortunately, this does not get us any closer to solving this problem, it just tells us that the $3 \times 3$ version of the problem is as hard as the original one. In this section, we present an algorithm that actually solves the rank2 problem. 

A first approach to the problem would be a brute force search: given a matrix $A$, we are looking for two integer points in the 2-dimensional cone of its diagram that generate all the points corresponding to columns of $A$ as nonnegative combinations (see Problem~\ref{q:cone} and the discussion around it). Since each generator has to be in the nonnegative orthant and be entrywise smaller than or equal to those points, we have a finite search space that we could exhaust.

This approach certainly works, but is impractical due to the size of the search space. The main idea of our proposed algorithm is to use the geometric interpretation of \Cref{subsec:geometric} to restrict the search space for nonnegative integer vectors that factorize the given matrix (see Algorithm~\ref{alg:NNI2}). The intuition for reducing the search space can be seen in Example~\ref{ex:graphicexample2}, where we note that in order to generate the points of $\col(A)$ nonnegatively, our 2 generators must somehow live ``outside'' this set of points. We formalize this idea below. 

\subsection{Algorithm details}

The algorithm receives a rank $2$ nonnegative matrix $A$, of size $n \times m$ and proceeds according to the following steps.

\subsubsection{Constructing the diagram}

Our first step is to construct the canonical diagram associated to matrix $A$. To do this we need to construct a basis for the lattice 
$\col(A)\cap\ZZ_+^n$ and map it to $\ZZ^2$. This can be done by computing the Smith Normal form of $A$ (see \cite{Micc08}). After that we can perform a unimodular transformation to obtain a canonical form as discussed in \Cref{subsec:geometric}.

We obtain a cone $K_A$, the image of the cone $\col(A) \cap \RR_+^2$, that will be spanned by $e_1$ and some $\bm{c} \in \ZZ_+^2$, and a set of integer points $S_C=\{\bm{c}_1,...,\bm{c}_m\}$ inside $K_A$ that are images of the columns of $A$.

\subsubsection{Decomposing the Cone}

We take the cone $K \subseteq K_A$ generated by $S_C$ and find its generators $\bm{u}$ and $\bm{v}$ (these are just the points among the $S_C$ that have the lowest and highest slope).
 Thus we can decompose $K_A$ as the union of $K$ and an ``upper'' and ``lower'' cone 
\[
K_A = K_+\cup K\cup K_-,
\]
as pictured in Figure~\ref{fig:cone decomp}.
Note that in canonical form, we can write $K_{-} = \cone(\bm{e}_1,\bm{u})$, $K = \cone(\bm{u},\bm{v})$, and $K_{+} = \cone(\bm{v},\bm{c})$.

We claim that if $\rankzn(A)=2$, then the two lattice points $\bm{a},\bm{b}\in K_A$ that generate the points corresponding to the columns of $A$ must satisfy $\bm{a}\in K_{-}$ and $\bm{b}\in K_{+}$. To see this, notice that we need to generate $\bm{u},\bm{v}$ which themselves generate the cone $K$. If $\bm{u},\bm{v}$ are both nonnegative integer combinations of $\bm{a},\bm{b}$ then, in particular, $\bm{u},\bm{v}\in\cone(\bm{a},\bm{b})$. Thus $K\subset \cone(\bm{a},\bm{b})$ and without loss of generality, we have $\bm{a}\in K_{-}$ and $\bm{b}\in K_{+}$, as claimed.

\definecolor{Grn}{cmyk}{1,0.1,0.9,0.6}
\begin{figure}
    \centering
    \scalebox{0.8}{
    \begin{tikzpicture}
        \node[anchor=south west,inner sep=0] at (0,0) {\includegraphics[width=0.5\linewidth]{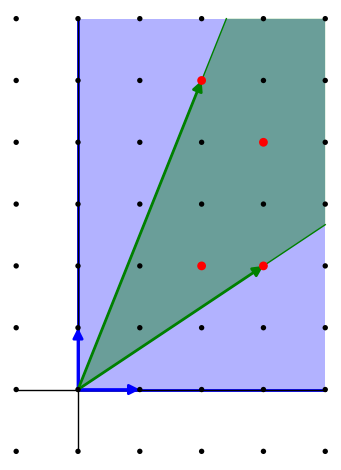}};
       \node at (4,5) {\Large\textcolor{Grn}{$K$}}; 
       \node at (5,2) {\Large\textcolor{Blue}{$K_{-}$}}; 
       \node at (5,3.3) {\Large\textcolor{OliveGreen}{$\bm{u}$}}; 
       \node at (2,6) {\Large\textcolor{Blue}{$K_{+}$}}; 
       \node at (3.2,7) {\Large\textcolor{OliveGreen}{$\bm{v}$}}; 
       \node at (1,2.1) {\Large\textcolor{blue}{$\bm{c}$}};  
    \end{tikzpicture}
    }
    \caption{Decomposition of the cone $K_A$.}
    \label{fig:cone decomp}
\end{figure} 




\subsubsection{Bounding the Search Space}
\label{sssec:alg_gens}

Next we show that there is a particular bounded region of each of $K_+$ and $K_{-}$ that must contain the generators $\bm{a},\bm{b}$, which allows us to search by enumerating all the integer points in the region. 

Recall that $\bm{u}\in K\cap K_{-}$ is one of the points we need to generate. Suppose that 
$\bm{u} = k\bm{a}+\ell\bm{b}$ for $k,\ell\in\mathbb{Z}_+$. Then we get that $k\bm{a} = \bm{u}-\ell\bm{b}\in K_{-}\cap(\bm{u}-K_{+})$. Hence for each integer point in the triangle $\Delta = K_{-}\cap(\bm{u}-K_{+})$, we get a possible $\bm{a}$ by choosing the minimal lattice point on the ray through that point (see Figure~\ref{fig:triangle_search}). 

\begin{figure}
    \centering  
    \scalebox{0.8}{
    \begin{tikzpicture}
        \node[anchor=south west,inner sep=0] at (0,0) {\includegraphics[width=0.5\linewidth]{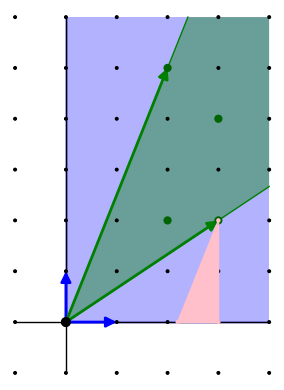}};
        \node at (4.4,2.15) {\Large\textcolor{Red}{$\Delta$}};
        \node[circle, fill, red, inner sep = 0pt, minimum size=5pt] at (4.67,3.63) {};
        \node[circle, fill, red, inner sep = 0pt, minimum size=5pt] at (4.67,2.52) {};
        \node[circle, fill, red, inner sep = 0pt, minimum size=5pt] at (4.67,1.46) {};
        \node at (4.67, 1.15) {\Large\textcolor{Red}{$k\bm{a}$}};
        \node[circle, draw, red, inner sep = 0pt, minimum size=5pt] at (2.5,1.46) {};
        \node at (2.6, 1.15) {\Large\textcolor{Red}{$\bm{a}$}};
    \end{tikzpicture}
    }
    \caption{Triangle of possible $k\bm{a}$ vectors}
    \label{fig:triangle_search}
\end{figure}

For each possible choice of $\bm{a}$, we then search for an appropriate $\bm{b}$. Since $\ell\bm{b}=\bm{u}-k\bm{a}$, if $k\bm{a}\neq \bm{u}$, then $\bm{u}-k\bm{a}$ determines the ray on which $\bm{b}$ must live and we can take $\bm{b}$ to be the minimal lattice point on that ray (which necessarily lives in $K_+$ since $k\bm{a}\in\bm{u}-K_+$). 

If $k\bm{a} = \bm{u}$, then 
we can look at the point $\bm{v}\in K\cap K_{+}$, which also has to be generated by $\bm{a}$ and $\bm{b}$. But $\bm{v}= k'\bm{a}+\ell'\bm{b}$ for $k',\ell'\in\mathbb{Z}_+$ implies that we can take as $\bm{b}$ any of the possible generators of the rays spanned by the integer points in the ray $\bm{v}-r\bm{a}$.

This procedure generates a list of candidates $(\bm{a},\bm{b})$ for generators. For each we must check if they indeed generate all the points $\bm{c}_i$ corresponding to the columns of $A$. If any of them do, we have found that the nonnegative integer rank of $A$ is two, if none of them do, the nonnegative integer rank of $A$ must be higher.

Note that checking if $\bm{c}_i$ is generated as an integer combination of $\bm{a}$ and $\bm{b}$ is simply checking if the coefficient vector 
\[
\bm{w}_i = \begin{bmatrix}
    \bm{a}  & \bm{b}
\end{bmatrix}^{-1}\bm{c}_i.
\]
is nonnegative and integer.


\subsubsection{Factorization from generators}

If we do find generators $\bm{a}$ and $\bm{b}$ we then can optionally proceed to reconstruct the matrix factorization. Note that the map that sends $\col(A) \cap \ZZ_+^n$ to $\ZZ_+^2$ has an inverse that can be written as $\phi(x)=Bx$ where the columns of $B$ are simply the preimages of the vectors $(1,0)$ and $(0,1)$. 

We claim that $A=(B\begin{bmatrix}
    \bm{a} & \bm{b}
\end{bmatrix})\begin{bmatrix}
    \bm{w}_1 & \cdots & \bm{w}_m
\end{bmatrix}$ provides a nonnegative integer rank $2$ factorization of $A$. To see this just note that $$\begin{bmatrix}
    \bm{a} & \bm{b}
\end{bmatrix}\begin{bmatrix}
    \bm{w}_1 & \cdots & \bm{w}_m
\end{bmatrix} = \begin{bmatrix}
    \bm{c}_1 & \cdots & \bm{c}_m
\end{bmatrix}$$
and the preimage of each $\bm{c}_i$ is the $i$-th column of $A$ so we indeed have the equality. Moreover, the $\bm{w}_i$ are nonnegative integer by hypothesis, and $B\begin{bmatrix}
    \bm{a} & \bm{b}
\end{bmatrix}$ is also nonnegative and integer as both $\bm{a}$ and $\bm{b}$ lie in $K_A \cap \ZZ^2$ so their preimages are in $\col(A) \cap \ZZ_+^n$.

\begin{ex}\label{eg:beasley_factor}
    Take 
    \[
    A=\begin{bmatrix} 2 & 0 & 3 \\ 1 & 1 & 4 \\ 1 & 3 & 9 \end{bmatrix}.
    \]
    The column space of $A$ is generated by $(1,0,-1)^\top$ and $(0,1,3)^\top$ and is orthogonal to the subspace generated by $(1,-3,1)^\top.$ From this we see that $\col(A)\cap\RR^3_+$ has generators $(3,1,0)^\top$ and $(0,1,3)^\top$. 

    Following $\cite{Micc08}$, we get that the lattice $\col(A)\cap\ZZ^3$ is generated by 
    \[
    B = \begin{bmatrix}
        0 & 1 \\ 1 & 0 \\ 3 & -1\end{bmatrix}
    \]
    and the coordinates of the columns of $A$ in this basis are $\bm{c}_1 = (1,2)^\top, \bm{c}_2 = (1,0)^\top$ and $\bm{c}_3= (3,4)^\top$. Furthermore, we can write the generators of $K_A$ with coordinates $(1,3)^\top$ and $(1,0)^\top$.
The resulting cones are pictured in Figure~\ref{fig:Beasley matrix cones}. Since they are already in canonical form, we have $T = I_2$, the identity matrix. 
        
    Now the triangle $\Delta = K_{-}\cap (\bm{u}-K_+)$ is simply the point $\bm{u} = (1,0)^\top$. Thus we can only have $\bm{a}=(1,0)^\top$. Then the only lattice point of the form $\bm{v}-k\bm{a}$ in $K_A$ is $\bm{v} = (1,2)^\top$, hence $\bm{b}=\bm{v}$. But now 
    \[
    \bm{w}_3 = \begin{bmatrix}
        1 & 1 \\
        0 & 2\\
    \end{bmatrix}^{-1}\begin{pmatrix}
        4 \\ 3
    \end{pmatrix} = \begin{pmatrix}
        \frac{5}{2} \\[2pt] \frac{3}{2} 
    \end{pmatrix}\notin\mathbb{Z}^2,
    \]
    so that $\rankzn(A)\neq 2.$
    
    \begin{figure}
        \centering        \includegraphics[width=0.5\linewidth]{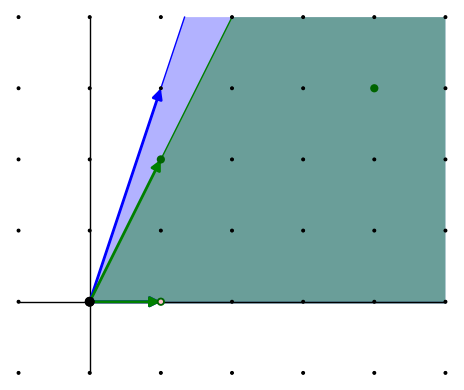}
        \caption{Cones of Example~\ref{eg:beasley_factor}: $K_+$ in blue, $K$ in green, and $K_-$ is just the $x$-axis.}
        \label{fig:Beasley matrix cones}
    \end{figure}    
\end{ex}

The complexity of this algorithm is theoretically not much better than full exhaustive search, as the step of searching for candidate factors in the triangle $\Delta$ is polynomial in $\|u\|$ which in turn is polynomial in $\|A\|_{\infty}$ and the size of the matrix. Running the code for the Example \ref{ex:graphicexample2} with growing $t$ we see that in fact the time taken seems to grow faster than linearly with the size of the entries (see \Cref{fig:n_mat_factor}).

\begin{remark}
This algorithm cannot easily be extended to ranks higher than two. The main reason for this is that both the cone $K$ and the cone $K_A$ get much more complicated in higher dimensions, and the containment between them is highly nontrivial. If the rank was $k$, our analogous candidate factors would be integer points $\bm{a}_1,...,\bm{a}_k$ in $K_A$ that span a simplex that contains $K$, but there is no simple way of searching over these in the same way the cone decomposition allows us in rank $2$.
\end{remark}

\subsection{Numerical Testing}

To demonstrate the effectiveness of our algorithm, we generate a collection of rank 2 nonnegative integer matrices to factor. We compare the average time to factor the full matrix versus the average time to reduce to a $3\times 3$ representative as described in Section~\ref{subsec:reduction} for various matrix sizes. We also record the percentage of test matrices that are found to have nonnegative integer rank~2. All algorithms are implemented in SageMath \cite{sage} and run on a laptop (2020 MacBook Air, M1 chip). The runtimes recorded in the following tests are meant to be indicative of general trends and as a proof of concept of the algorithm; they should not be taken as benchmarks and may vary with implementation, which we have not attempted to optimize. The code is available by request from the second author.

We generate two sets of test matrices. The first is a random collection of matrices of some fixed size $n\times n$ for a selection of values of $n$ with increasing entry size. The second is a collection of $3\times 3$ matrices with average entry size distributed roughly uniformly between 3 and 100 whose entries are all close to the average. 

To generate the first set of test matrices, we use a function\footnote{ DiscreteGaussianDistributionLatticeSampler}
in SageMath that returns lattice vectors $x$ proportionally to $\exp(-|x|^2/(2\sigma^2))$ for a chosen lattice and parameter $\sigma$. In our case, the lattice is $\ZZ^2$ and we vary $\sigma$, which has the effect of increasing the size of the entries of our matrix as we increase it. If we want an $n\times m$ matrix, we first generate $m$ nonzero lattice vectors in the nonnegative orthant to be columns of a matrix $C$. Then we generate additional nonzero lattice vectors and add them as rows of a matrix $B$ if they live in the dual of $\cone(\cols(C))$ until $B$ has $n$ rows. The resulting product $A=BC$ will be a nonnegative integer matrix of (regular) rank 2. 

Table~\ref{tab:numerics} shows the results of numerical testing on 1000 matrices with $n\in\{3,5,10,50\}$ and 100 matrices with $n=100$ with each of $\sigma\in\{3,6,10,25\}$. For each choice of parameters, we list the average largest entry over the set of matrices, the average number of seconds to factorize each matrix, and the number of matrices that have nonnegative integer rank 2. 

From the tables, we can see that there is a general correlation between the size of the largest entry and the factorization time. Similarly, there seems to be some correlation between $n$ and factorization time; take, for example, $n=3,\sigma=10$ versus $n=50,\sigma=6$ which have roughly the same average largest entry, but $n=3$ is around 10 times faster on average. 

    \begin{table}[h]
    \caption{Testing Algorithm~\ref{alg:NNI2} on random $n\times n$ rank 2 matrices.}
    \label{tab:numerics}
    \centering
    \begin{tabular}{cc|c|c|c|c|c}
    \hline\hline
    \multicolumn{2}{c|}{Parameters} & Avg largest & \multicolumn{3}{|c|}{Seconds per matrix} & NNI rank 2     \\
    $n$ & $\sigma$ & matrix entry & min & average & max & (count) \\ \hline
    3 & 3 & 24.4 & 0.00397 & 0.00550 & 0.05487 & 869/1000  \\
    3 & 6 & 95.9 & 0.00397 & 0.00675 & 0.41773 & 740/1000  \\
    3 & 10 & 275.1 & 0.00408 & 0.00998 & 0.51089 & 662/1000  \\
    3 & 25 & 1685.8 & 0.00457 & 0.08304 & 4.27659 & 619/1000 \\ \hline
    5 & 3 & 31.8 & 0.00492 & 0.00809 & 0.55515 & 729/1000  \\
    5 & 6 & 131.7 & 0.00507 & 0.00713 & 0.01644 & 529/1000  \\
    5 & 10 & 370.4 & 0.00518 & 0.00847 & 0.40386 & 461/1000  \\
    5 & 25 & 2321.2 & 0.00614 & 0.02747 & 1.25443 & 369/1000 \\ \hline
    10 & 3 & 44.6 & 0.00753 & 0.01169 & 0.07471 & 744/1000  \\ 
    10 & 6 & 174.3 & 0.00710 & 0.01031 & 0.02024 & 583/1000 \\
    10 & 10 & 503.2 & 0.00695 & 0.01042 & 0.01939 & 479/1000 \\
    10 & 25 & 3079.5 & 0.00805 & 0.01666 & 0.61764 & 354/1000 \\ \hline
    50 & 3 & 71.9 & 0.06883 & 0.09754 & 0.44215 & 998/1000 \\ 
    50 & 6 & 292.5 & 0.06677 & 0.08828 & 0.37194 & 973/1000 \\ 
    50 & 10 & 803.0 & 0.06836 & 0.08927 & 0.41484 & 848/1000 \\ 
    50 & 25 & 5034.9 & 0.07059 & 0.09617 & 0.41014 & 598/1000 \\ \hline  
    100 & 3 & 84.0 & 0.25288 & 0.89296 & 1.84873 & 100/100 \\
    100 & 6 & 331.4 & 0.27246 & 1.13803 & 1.86426 & 100/100 \\
    100 & 10 & 934.2 & 0.33168 & 1.74228 & 4.21948 & 100/100 \\
    100 & 25 & 6009.6 & 1.14274 & 2.80229 & 5.10633 & 80/100 \\
     \hline
    \end{tabular}
    \end{table}

   \begin{table}[h]
    \caption{Direct factorization vs. reduce then factor on $n\times n$ matrices.}
    \label{tab:reduction}
    \centering
    \begin{tabular}{c|c|c|c|c@{\;+}c|c}
    \hline\hline
    \multicolumn{2}{c|}{Parameters} & Avg largest & \multicolumn{3}{|c|}{Average seconds per matrix } & NNI rank 2     \\
    n & $\sigma$ & matrix entry & factor & reduce & factor & (count) \\ \hline
10 & 3 & 45.1 & 0.01062 & 0.04027 & 0.00653 & 77/100\\
10 & 6 & 177.6 & 0.01314 & 0.04226 & 0.00657 & 58/100\\
10 & 10 & 481.1 & 0.01136 & 0.04482 & 0.00675 & 39/100\\
10 & 25 & 3009.7 & 0.01816 & 0.08077 & 0.01396 & 34/100\\ \hline
1000 & 3 & 9994.3 & 38295.8 & 285.6 & 8.26 & 3/3\\
\hline
\hline    
     \hline
    \end{tabular}
    \end{table}

In Table~\ref{tab:reduction}, we record the average factorization times for 100 randomly-generated $10\times 10$ matrices, as well as for 3 randomly generated $1000\times 1000$ matrices. We also record the average time to reduce each matrix in the set to the $3\times 3$ case, as described in Section~\ref{subsec:reduction}, and the resulting average time to factor each $3\times 3$. Although there is some improvement in the factorization time from the $10\times 10$ case to its reduction, it is negligible in comparison to the time it takes to perform the reduction. However, at $n=1000$, we see that first performing the reduction significantly improves the overall factorization time.

For our second test set, we further investigate the relationship between the size of the matrix entries and the factorization time. Additionally, to control for the fact that when a matrix has nonnegative integer rank 2, factorization could be fast if we happen to find appropriate generators early, we attempt to construct matrices that are likely to have higher rank. To do this we take inspiration from Example~\ref{ex:graphicexample2}. In Figure~\ref{fig:n_mat_factor}, we see the effect of increasing $t$, the average entry size, on the factorization time for the $3\times 3$ matrix $B_t$, which has nonnegative integer rank 3.

\begin{figure}
    \centering
    \includegraphics[width=0.7\linewidth]{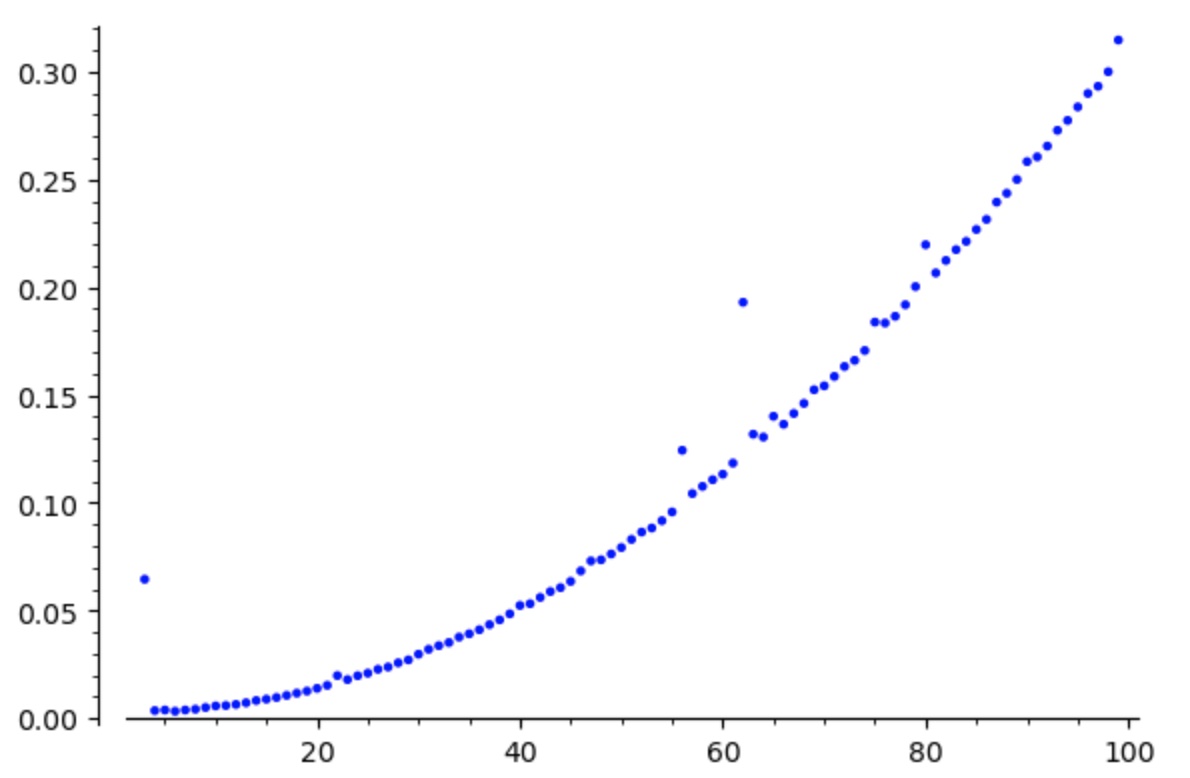}
    \caption{Factoring times for matrices of Example~\ref{ex:graphicexample2} for increasing $t$}
    \label{fig:n_mat_factor}
\end{figure}

We generate a similar set of 1000 $3\times 3$ matrices that have entries close to some positive integer $t$, which we choose from a uniform distribution on $[3,100]$. In particular, we use the same lattice distribution sampler as before with $\sigma=2$ centered at $(t,t)^\top$ to generate 3 points inside $\cone((1,0)^\top, (1,2)^\top)$ that form a diagram similar to that of Figure~\ref{fig:graphicexample3}. To obtain the corresponding matrix we evaluate the 3 points at $x,y,$ and $2x-y$.

In Figure~\ref{fig:numerics_n}, we plot the average value $x$ of the entries of each matrix versus its factorization time. Red stars indicate matrices of nonnegative integer rank~3, while blue circles indicate rank 2. 

\begin{figure}[h]
    \centering
    \includegraphics[width=0.9\linewidth]{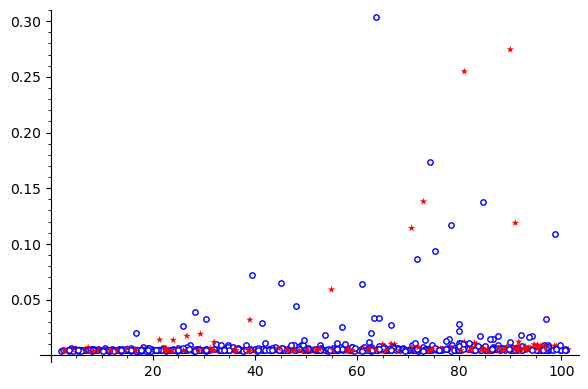}
    \caption{Factoring times for matrices with average entry $x$}
    \label{fig:numerics_n}
\end{figure}


While we see a slight upward trend in these figures, it is largely among outliers. The average factorization time among all matrices (of which 514 were rank 2) was 0.00812 seconds. The majority of matrices are simply factored too quickly to identify any meaningful trend. 

\section{Conclusion}

We interpret the nonnegative integer factorization problem geometrically, as finding a particular set of generators for  points in a rational cone. In this way we are able to introduce an algorithm that can efficiently answer \Cref{Q:rank2}.  
Even from our preliminary implementation of Algorithm~\ref{alg:NNI2}, we can see that it is possible to check if even large dense matrices have nonnegative integer rank~2. 
In the case of these large matrices, we can take advantage of our reduction to the equivalent $3\times 3$ problem to further improve the computational time. 

However, as we have seen, the current approach has serious limitations. The algorithm fundamentally relies on the fact that we are only considering matrices of (regular) rank 2. While much of our interpretation generalizes to higher ranks, many open questions remain regarding how to apply this geometric insight to exactly compute, or even simply bound, the nonnegative integer rank beyond the rank~2 case. 

\paragraph{Acknowledgments} \

The first author was partially funded by the Centre for Mathematics of the University of Coimbra (CMUC, https://doi.org/10.54499/UID/00324/2025) under the Portuguese Foundation for Science and Technology (FCT), through Grants UID/00324/2025, and UID/PRR/00324/2025. The second author was partially funded by the Natural Sciences and Engineering Research Council of Canada (cette recherche est partiellement financ\'ee par le Conseil de recherches en sciences naturelles et en g\'enie du Canada), Discovery  Grant \#2024-04643.

\begin{algorithm}
\caption{NNI Rank 2 Factorization Algorithm} \label{alg:NNI2}
    \SetKwRepeat{Do}{do}{while}
    \DontPrintSemicolon
        \KwIn{Integer matrix $A = [a_1\cdots a_n]\in\mathbb{Z}^{m\times n}$; index of canonization $r$.}

        Initialize factors $F_1 = [\,], F_2 = [\,]$
        
        Compute basis $B$ for $\col(A)\cap \mathbb{Z}^m$

        Compute generators $c_1,c_2$ of $\col(A)\cap\RR_+^m$ 

        \For{$i$ from $1$ to $m$}
        {
        Solve $Bx_i = a_i$ for $x_i\in\mathbb{Z}^2$
        }
        Find canonical transform $T$ such that $Tc_r = e_1$
        
        \For{$v \in [c_1,c_2,x_1,\ldots, x_m]$}
        {
        $v \gets Tv$ \;
        }  
        Decompose $K_A = K_{-}\cup K\cup K_+$

        Set $u$ to be the generator of $K_{-}$ that is not $e_1$
        
        \For{$ka \in K_{-}\cap(u-K_+)\cap\ZZ^2_+$}
        {
        Set $a\in\ZZ^2$ as primitive generator of $\RR_+(ka)$
        
        Set $b\in\ZZ^2$ as primitive generator of $u-ka$

        \If{$b\neq 0$}
        {
        \For{$i$ from $1$ to $m$}
        {
        $w_i = (a,b)^{-1}x_i$
        
        \If{$w_i\notin\ZZ_+^2$}
        {
        next $ka$
        }
        }
        $F_2 \gets [w_1\cdots w_m] \in \ZZ_+^{2\times m}$\;
        $F_1 \gets B [a\; b]$ \;
        
        \Return{$F_1,F_2$}
        }
        \Else
        {
        $v \gets K\cap K_+$

        $k\gets 0$
        
        \While{$v-ka \in K_A$}
        {
        Set $b$ as primitive ray generator of $v-ka$

        \For{$i$ from $1$ to $m$}
        {
        $w_i = (a,b)^{-1}x_i$
        
        \If{$w_i\notin\ZZ_+^2$}
        {
        $k \gets k+1$
        
        \If{$v-ka \notin K_A$}
        {
        next $ka$
        }
        next $b$
        }
        }
        
        $F_2 \gets [w_1\cdots w_m] \in \ZZ_+^{2\times m}$\;
        $F_1 \gets B [a\; b]$ \;
        \Return{$F_1,F_2$}
        }
        }
        }

    \end{algorithm}

\bibliography{stuff}{}
\bibliographystyle{alpha}

\end{document}